\newtheorem{thm}{Theorem}
\def\section{\@startsection{section}{1}{\z@}{-3.5ex plus -1ex minus
			  -.2ex}{2.3ex plus .2ex}{\large\bf}}
\def\subsection{\@startsection{subsection}{2}{\z@}{-3.25ex plus -1ex
			  minus -.2ex}{1.5ex plus .2ex}{\normalsize\bf}}
\begin{document}

\title{Quotient polynomials with positive coefficients}
\author{Mark B. Villarino\\
Escuela de Matem\'atica, Universidad de Costa Rica,\\
11501 San Jos\'e, Costa Rica}
\date{\today}

\maketitle

\begin{abstract}
We give an optimal necessary and sufficient condition for the quotient
polynomial and remainder in the division algorithm to have positive
coefficients.
\end{abstract}


\section{Introduction} 

In a recent paper \cite{Vil2} Michael Hirschhorn and the present
author had to prove that the polynomial
\begin{equation}
\label{nasty-poly}
f(n) := 2842 n^5 - 7821 n^4 - 16884 n^3 + 10428 n^2 + 5082 n - 2607
\end{equation}
is positive for all integers $n \geq 4$. We did so by writing down the
identity
\begin{equation}
\label{nice-poly}
f(n) \equiv (2842 n^4 + 6389 n^3 + 15061 n^2 + 85733 n + 433747)(n - 5)
+ 2166128.
\end{equation}
Since \emph{all the coefficients of the quotient polynomial are
positive as well as the remainder}, this shows by inspection that
$f(n) > 0$ for $n \geq 5$ and computing $f(4) = 12025$ completes the
proof. This example illustrates a powerful method for proving that
polynomials have strictly positive values for all integers exceeding a
given integer. We learned this method from a paper by
\textsc{Chen}~\cite{CQ}, who continues to exploit the method to this
day (see~\cite{chen2} and the references cited there), and we have
subsequently used it in our own researches~\cite{Vil1,Vil2}.

We also conjectured in \cite{Vil2} that the theoretical basis of the
method is a true theorem and and our paper is dedicated to proving
this conjecture.

\begin{thm}[Conjecture] 
\label{ch}
Let $f(x)$ be a polynomial with real coefficients whose principal
coefficient is positive and with at least one positive root $x = a$.
Then there exists an $x = b \geq a$ such that the remainder resulting
from the division algorithm applied to the quotient
$\frac{f(x)}{x - b}$ is nonnegative and the nonzero coefficients of
the quotient polynomial are all positive numbers.
\end{thm}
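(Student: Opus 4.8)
The plan is to make the division completely explicit and then push $b$ out towards infinity. Write $f(x)=\sum_{j=0}^{n}c_jx^{j}$ with $c_n>0$; since $f$ has a root it is not a nonzero constant, so $n\ge 1$. For any real $b$ the division algorithm yields $f(x)=(x-b)q(x)+f(b)$ with $q(x)=\sum_{k=0}^{n-1}q_kx^{k}$, and Horner's scheme records the coefficients of $q$ through the recursion
\[
q_{n-1}=c_n,\qquad q_{k-1}=c_k+b\,q_k\ \ (k=n-1,\dots,1),\qquad f(b)=c_0+b\,q_0 .
\]
Unwinding this shows that $q_{n-1-i}$ is, as a function of $b$, a polynomial of degree $i$ with leading coefficient $c_n$, namely $q_{n-1-i}=\sum_{m=0}^{i}c_{n-m}b^{\,i-m}$, while the remainder is $f(b)$ itself, a polynomial of degree $n$ with leading coefficient $c_n$.

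Next I would exploit $c_n>0$: each of the finitely many quantities $q_0(b),\dots,q_{n-1}(b),f(b)$ has positive leading coefficient in $b$, hence tends to $+\infty$, hence is positive once $b$ exceeds some threshold; the largest of these finitely many thresholds is a single $B$ that works for all of them simultaneously. To keep the argument effective I would instead prove, by downward induction on $k$, that $q_k\ge c_n$ holds for every $k$ as soon as $b\ge M:=1+c_n^{-1}\max_{0\le j\le n-1}|c_j|$: the base case $q_{n-1}=c_n$ is immediate, and if $q_k\ge c_n$ then $q_{k-1}=c_k+b\,q_k\ge -|c_k|+Mc_n=c_n+\bigl(\max_j|c_j|-|c_k|\bigr)\ge c_n$. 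One more application of the same estimate gives $f(b)=c_0+b\,q_0\ge -|c_0|+Mc_n\ge c_n>0$.

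Finally, I would set $b:=\max(a,M)$. Then $b\ge a$ as demanded; every coefficient $q_k$ of the quotient satisfies $q_k\ge c_n>0$, so in particular all of its nonzero coefficients are positive (indeed all of them are); and the remainder $f(b)\ge c_n>0$ is nonnegative. This proves the statement.

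I expect the genuinely delicate point to be not this existence argument — which is essentially a Cauchy-bound calculation — but the sharpening advertised in the abstract. The bound $M$ above is crude; the optimal admissible $b$ is the least $b\ge a$ at which $f$ and all the partial polynomials $q_0,\dots,q_{n-1}$ (each viewed as a polynomial in $b$) are simultaneously nonnegative. Describing this set of admissible $b$ — in particular verifying that it contains a ray $[b_0,\infty)$, locating $b_0$ in terms of the largest real zeros of that finite family of polynomials, and relating it to $a$ so that the \emph{smallest} valid $b$ can be named — is where the real work, and the optimality, must come from.
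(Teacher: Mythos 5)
Your proof is correct, and it takes a genuinely different route from the paper's. The Horner recursion is set up properly, the downward induction giving $q_k\ge c_n$ for every $k$ once $b\ge M:=1+c_n^{-1}\max_{0\le j\le n-1}|c_j|$ is sound (it uses only $b\ge M>0$ and $q_k\ge c_n>0$), the same estimate gives $f(b)=c_0+bq_0\ge c_n>0$ for the remainder, and the choice $b=\max(a,M)$ satisfies $b\ge a$, so the conjecture as stated is proved. The paper works with the same objects — its coefficient polynomials $f_i$ evaluated at $b$ are exactly your $q_{n-1-i}$ viewed as polynomials in $b$ — but instead of an explicit threshold it takes $b$ to be the largest positive root of the whole family $f_1,\dots,f_n$, then uses Lagrange's lemma together with the intermediate value theorem to show that none of the values $f_k(b)$ can be negative (a negative value would force a root of $f_k$ beyond $b$, contradicting maximality), uses Laguerre's monotonicity theorem to show every larger $b$ also works, and shows that no smaller $b$ works; this yields the necessary and sufficient (optimal) threshold advertised in the abstract. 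Your argument buys an explicit, effective Cauchy-type bound and avoids all of that classical machinery, but it delivers only existence of some admissible $b$, generally far from optimal; the paper's argument buys the exact characterization. Your closing remark correctly locates the remaining work: the paper's Existence theorem is precisely the statement that the least admissible $b$ is the largest positive root of the family $q_0,\dots,q_{n-1},f$ and that the admissible set is the ray $[b,\infty)$.
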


We note that the example $f(x) := (x - 1)(x - 2)(x - 3)(x - 4)$ shows
that we \emph{cannot} take $b$ to be the first integer greater than
the largest positive root, $x = 4$, since the quotient by $x - 5$
gives $x^3 - 5x + 10$ which has a negative coefficient. We will see
below that $b$ \emph{must be greater than or equal to the largest of
all the positive roots of all the coefficient polynomials}
(see~\eqref{coeff} below) for the quotient polynomial and the
remainder polynomial. Indeed, in our example, the first $b$ that works
is $b = 10$.


\section{Laguerre's test} 

The location of the roots of a polynomial is a classical subject which
continues to be in the forefront of modern mathematical
investigations. The advent of computers and symbolic manipulation
programs have renewed and stimulated interest in this fascinating
subject.

In particular, recent interest in an upper bound to the largest
positive root of a polynomial with real coefficients has produced
important new contributions~\cite{Vig}.

In 1880, the distinguished French mathematician \textsc{E. N.
Laguerre} published a paper~\cite{Lgr} containing a test for an upper
bound for the largest positive root. This test has since become
standard material in textbooks on the theory of equations~\cite{U}.

\emph{This test also implicitly contains the theoretical basis of
Chen's method.}

Let
\begin{equation}
\label{f}
f(x) := a_0 x^n + a_1 x^{n-1} +\cdots+ a_n = 0
\end{equation}
be an equation with real coefficients of which the leading coefficient
$a_0 > 0$. Let $b$ be any positive number. Then the division algorithm
identity gives
\begin{equation*}
f(x) \equiv f_n(x)
\equiv (x - b) \{f_0(b) x^{n-1} + f_1(b) x^{n-2} +\cdots+ f_{n-1}(b)\}
+ f_n(b),
\end{equation*}
where the coefficient polynomials $f_k(x)$, $k = 0,1,\dots, n - 1$,
and the remainder polynomial $f_n(x) \equiv f(x)$ (which in fact is the
original polynomial) are defined by
\begin{gather}
f_0(x) := a_0, \qquad 
f_1(x) := x f_0(x) + a_1, \quad \cdots
\nonumber \\
f_{n-1}(x) := x f_{n-2}(x) + a_{n-1}, \qquad
f_n(x) := x f_{n-1}(x) + a_n,
\label{coeff}
\end{gather}
and where for any $k = 0,1,\dots,n$, the following identity is valid:
\begin{equation}
\label{ff}
f_k(x) 
\equiv (x - b) \{f_0(b) x^{k-1} + f_1(b) x^{k-2} +\cdots+ f_{k-1}(b)\}
+ f_k(b).
\end{equation}

Laguerre proved the following two theorems.

\begin{thm} 
\label{lag1}
If, for some positive number $b$, the numbers
\begin{equation*}
f_1(b), f_2(b),\dots, f_{n-1}(b)
\end{equation*}
are nonnegative and
\begin{equation*}
f_n(b) > 0,
\end{equation*}
then $x = b$ is an upper bound for the largest positive root of
$f(x) = 0$.
\end{thm}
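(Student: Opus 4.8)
The plan is to read off the sign of $f(x)$ for $x\geq b$ directly from the division identity \eqref{ff}, which already packages everything we need. Specialising \eqref{ff} to $k=n$ gives
\begin{equation*}
f(x)=f_n(x)\equiv (x-b)\bigl\{f_0(b)x^{n-1}+f_1(b)x^{n-2}+\cdots+f_{n-1}(b)\bigr\}+f_n(b).
\end{equation*}
So the whole argument reduces to a sign count in this one formula.

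First I would fix a real $x>b$. Since $b>0$ we have $x>0$, hence $x^j>0$ for every exponent $j\geq 0$, and $x-b>0$. By hypothesis $f_0(b)=a_0>0$ and $f_1(b),\dots,f_{n-1}(b)\geq 0$, so the bracketed sum is strictly positive — its leading term $a_0x^{n-1}$ alone is positive and no other term is negative — whence $(x-b)\{\cdots\}>0$. Adding $f_n(b)>0$ yields $f(x)>0$. At the endpoint $x=b$ the product term vanishes and the identity collapses to $f(b)=f_n(b)>0$. Therefore $f(x)>0$ for every real $x\geq b$, so $f$ has no zero in $[b,\infty)$; in particular every positive root of $f(x)=0$ is strictly less than $b$, which is exactly the assertion that $b$ is an upper bound for the largest positive root.

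The only points that need any care are (i) the validity of the identity \eqref{ff}, which is obtained by induction on $k$ from the recursion \eqref{coeff} defining the $f_k$, but this is already recorded in the excerpt and may be taken as given; and (ii) keeping the final inequality \emph{strict}, which is precisely where the two quantitative hypotheses are spent — $a_0>0$ guarantees that the top term $f_0(b)x^{n-1}$ of the bracket is positive (not merely that the bracket is $\geq 0$), and $f_n(b)>0$, as opposed to $f_n(b)\geq 0$, rules out $x=b$ itself being a root. I do not expect a genuine obstacle here: there is no analytic content, only the observation that the division identity has already done the work and that Laguerre's hypotheses are calibrated exactly so that, for $x\geq b$, no term on the right-hand side can be negative.
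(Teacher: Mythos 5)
Your proof is correct and follows the same route as the paper: specialise the division identity to $k=n$, observe that for $x\geq b$ every term on the right-hand side is nonnegative and $f_n(b)>0$ forces $f(x)>0$, hence no root lies in $[b,\infty)$. The paper's own proof is just a terser version of this same sign count; you have merely written out the details it leaves implicit.
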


\begin{proof}
By hypothesis and from the identity~\eqref{f} it follows that
\begin{equation*}
x \geq b  \implies  f_n(x) \equiv f(x) > 0,
\end{equation*}
so there cannot be any real root of the equation $f(x) = 0$ surpassing
$x = b$ or even equal to $x = b$.
\end{proof}

\begin{thm} 
\label{lag2}
If for some $b > 0$  and some $k \leq n$, the numbers
\begin{equation*}
f_1(b), f_2(b), \dots, f_k(b)
\end{equation*}
are nonnegative, then for $b' > b$ the numbers
\begin{equation*}
f_1(b'), f_2(b'), \dots, f_k(b')
\end{equation*}
are also nonnegative.
\end{thm}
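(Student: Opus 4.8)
The plan is to get everything out of the single identity~\eqref{ff}, used not merely at the top index $n$ but at an arbitrary index $j$ with $1 \le j \le k$. Fixing such a $j$ and setting $x = b'$ in~\eqref{ff} (with $k$ there replaced by $j$) yields
\begin{equation*}
f_j(b') = (b' - b)\bigl\{ f_0(b)\,{b'}^{\,j-1} + f_1(b)\,{b'}^{\,j-2} + \cdots + f_{j-1}(b) \bigr\} + f_j(b),
\end{equation*}
and the whole proof reduces to observing that every summand on the right-hand side is nonnegative.

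Indeed, $b' > b > 0$ forces $b' - b > 0$ and ${b'}^{\,i} > 0$ for every exponent $i \ge 0$. The numbers multiplying these powers are $f_0(b), f_1(b), \dots, f_{j-1}(b)$: the first equals $a_0 > 0$, and the remaining ones are $\ge 0$ because they occur among $f_1(b), \dots, f_k(b)$ (here we use $j - 1 < k$). The trailing term $f_j(b)$ is $\ge 0$ for the same reason, since $j \le k$. Hence $f_j(b') \ge 0$, and letting $j$ range over $1, 2, \dots, k$ gives the theorem. No induction is actually required; if one prefers, the recursion~\eqref{coeff} lets one phrase the same computation inductively, carrying along the sharper remark that, once $f_1(b), \dots, f_{j-1}(b)$ are nonnegative, the function $f_j$ is in fact nondecreasing on $[b, \infty)$, both the bracket and the factor $x - b$ being nonnegative and nondecreasing there.

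I do not expect a genuine obstacle: the statement is essentially immediate from~\eqref{ff}, which the excerpt already supplies. The two points that must not be skipped are (i) invoking~\eqref{ff} at the running index $j$, not just at $n$, so that the coefficients appearing are precisely $f_0(b), \dots, f_{j-1}(b)$, which the hypothesis controls; and (ii) the strict positivity of $b$, hence of $b'$ and of all the intermediate powers ${b'}^{\,i}$, without which the signs in the displayed identity would no longer be forced. In passing, the argument delivers a little more than is claimed: for the conclusion at index $j$ one needs only $f_1(b), \dots, f_j(b) \ge 0$, so nonnegativity of the leading coefficient polynomials propagates index by index from $b$ to any larger $b'$.
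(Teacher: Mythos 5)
Your argument is correct and coincides with the paper's own proof: both substitute $x = b'$ into the identity~\eqref{ff} at each index $i = 1,\dots,k$ and note that $b' - b > 0$, $f_0(b) = a_0 > 0$, and the hypothesized nonnegativity of $f_1(b),\dots,f_k(b)$ force every term on the right-hand side to be nonnegative. Your added remarks (index-by-index propagation and monotonicity on $[b,\infty)$) are fine but not needed.
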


\begin{proof}
Take $x = b'$ in the identity~\eqref{ff}. Then
\begin{equation}
\label{b'}
f_i(b') \equiv (b' - b) \{f_0(b) b'^{i-1} + f_1(b) b'^{i-2}
+\cdots+ f_{i-1}(b)\} + f_i(b)
\end{equation}
is a positive number for $i = 1,\dots,k$, since $b' > b$ and
$f_0(b) = a_0 > 0$.
\end{proof}


\section{Lagrange's lemma} 

In 1769, Lagrange published a memoir on the numerical solution of
algebraic equations~\cite{L} and stated the following result (which he
ascribes to MacLaurin).

\begin{thm}[Lagrange's lemma] 
Suppose $a_k$ is the first of the negative coefficients of the
polynomial $f(x)$ in~\eqref{f}. Then
\begin{equation}
\label{ll}
x > 1 + \sqrt[k]{\frac{B}{a_0}}  \implies  f(x) > 0,
\end{equation}
where $B$ is the largest absolute value of the negative coefficients
of the polynomial $f(x)$. Moreover, $1 + \sqrt[k]{\frac{B}{a_0}}$ is
greater than the largest positive root of~$f(x)$.
\end{thm}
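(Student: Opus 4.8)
The plan is to bound $f(x)$ from below at an arbitrary real $x>1$ by weakening it in three steps: keep the leading term $a_0x^n$ untouched, discard the nonnegative coefficients $a_1,\dots,a_{k-1}$ (dropping them only decreases $f$), and replace every remaining coefficient by its worst possible value $-B$. For $x>1$ this yields
\begin{equation*}
f(x)\;\ge\;a_0x^n+\sum_{j=k}^{n}a_jx^{\,n-j}\;\ge\;a_0x^n-B\sum_{i=0}^{n-k}x^{\,i}\;=\;a_0x^n-B\,\frac{x^{\,n-k+1}-1}{x-1},
\end{equation*}
the first inequality using $a_1,\dots,a_{k-1}\ge0$ (since $a_k$ is the \emph{first} negative coefficient) and the second using $a_j\ge-B$ for every $j$, which is precisely the definition of $B$ (the hypothesis forces at least one negative coefficient, so $B>0$).

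Next I would drop the $-1$ in the numerator of the geometric sum, $\dfrac{x^{\,n-k+1}-1}{x-1}<\dfrac{x^{\,n-k+1}}{x-1}$, and factor out the positive quantity $\dfrac{x^{\,n-k+1}}{x-1}$, obtaining
\begin{equation*}
f(x)\;>\;\frac{x^{\,n-k+1}}{x-1}\,\Bigl(a_0\,x^{k-1}(x-1)-B\Bigr).
\end{equation*}
So it suffices to prove $a_0\,x^{k-1}(x-1)\ge B$ whenever $x\ge1+\sqrt[k]{B/a_0}$. The key elementary inequality is $x^{k-1}(x-1)\ge(x-1)^{k-1}(x-1)=(x-1)^k$, which holds because $x>x-1>0$; hence $a_0x^{k-1}(x-1)\ge a_0(x-1)^k\ge a_0\cdot\dfrac{B}{a_0}=B$ as soon as $(x-1)^k\ge B/a_0$, i.e.\ $x\ge1+\sqrt[k]{B/a_0}$. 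Combining the two displays gives $f(x)>0$ for every such $x$, which is the implication~\eqref{ll}.

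For the ``moreover'' clause I would observe that the second display is a \emph{strict} inequality and still applies at the endpoint $x=1+\sqrt[k]{B/a_0}$ (which is $>1$), where $a_0x^{k-1}(x-1)-B\ge a_0(x-1)^k-B=0$; hence $f(x)>0$ holds for all $x\ge1+\sqrt[k]{B/a_0}$, so this number is strictly larger than the largest positive root. I do not anticipate a real obstacle. The points needing a little care are the degenerate case $k=1$ --- there the discarded block $a_1,\dots,a_{k-1}$ is empty, $x^{k-1}=1$, and the argument reduces to the immediate estimate $a_0(x-1)\ge B$ --- and bookkeeping the single strict step (the geometric-sum estimate $x^{\,n-k+1}-1<x^{\,n-k+1}$), which is exactly what upgrades ``$1+\sqrt[k]{B/a_0}$ is an upper bound for the positive roots'' to ``$1+\sqrt[k]{B/a_0}$ strictly exceeds every positive root.''
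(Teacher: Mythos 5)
Your proof is correct and follows essentially the same route as the paper's: the same three-step weakening to $a_0x^n - B\,\frac{x^{n-k+1}-1}{x-1}$, the same dropping of the $-1$ in the numerator, and the same key inequality $a_0x^{k-1}(x-1)\ge a_0(x-1)^k\ge B$. Your explicit observation that the strict geometric-sum step keeps $f>0$ even at the endpoint $x=1+\sqrt[k]{B/a_0}$ is a small added point of rigor for the ``moreover'' clause, which the paper treats more briefly.
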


\begin{proof}
We assume $x > 1$. If in $f(x)$ each of the nonnegative coefficients
$a_1,a_2,\dots,a_{k-1}$ be replaced by zero, and each of the remaining
coefficients $a_k,a_{k+1},\dots,a_n$ be replaced by the negative
number $-B$, we obtain
\begin{align*}
f(x) &\geq a_0 x^n - B(x^{n-k} + x^{n-k-1} +\cdots+ 1)
\\
&= a_0 x^n - B\,\frac{x^{n-k+1} - 1}{x - 1}
\\
&> a_0 x^n - \frac{B}{x-1}\, x^{n-k+1}
\\
&= \frac{x^{n-k+1}}{x - 1}\, \{a_0 x^{k-1}(x - 1) - B\}
\\
&> \frac{x^{n-k+1}}{x - 1}\, \{a_0(x - 1)^k - B\}
\\
&> 0,
\end{align*}
where the first two strict inequalities are due to the assumption
$x > 1$ and the last is due to the assumption
$$
x > 1 + \sqrt[k]{\frac{B}{a_0}} \,.
$$
Finally, the proof shows that the inequality $f(x) > 0$ is valid for
all $x > 1 + \sqrt[k]{\frac{B}{a_0}}$ which proves the last statement.
\end{proof}


\section{Proof of the conjecture} 

Now we give the proof of Theorem~\ref{ch} which is the theoretical
basis of Chen's method of proving that polynomials have strictly
positive values for all integers beyond an initial value. We will
prove the following \emph{optimal} theorem, which evidently suffices
to prove our conjecture.

\begin{thm}[Existence proof] 
Suppose that $x = a$ is the largest positive root of the polynomial
equation~\eqref{f}. Let $x = b$ be the largest positive root of all
the polynomials $f_1(x),\dots,f_n(x)$.

Then the numbers
\begin{equation*}
f_1(b), f_2(b), \dots, f_{n-1}(b), f_n(b)
\end{equation*}
are nonnegative and at least one is strictly positive. Moreover, no
value of $x$ smaller than $x = b$ will work, while every value greater
than or equal to $b$ will work.
\end{thm}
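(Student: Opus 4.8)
The plan is to translate the phrase ``$x=c$ works'' into the single condition that $f_1(c),\dots,f_n(c)$ are all nonnegative (the leading quotient coefficient $f_0(c)=a_0$ being automatically positive and $f_n(c)$ being the remainder), and then to exploit two structural facts: each coefficient polynomial $f_k(x)=a_0x^k+a_1x^{k-1}+\cdots+a_k$ has the same positive leading coefficient $a_0$, so it is eventually positive; and the division identity \eqref{ff} already encodes how the value at one point controls the values at larger points. Throughout I would write $\rho_k$ for the largest positive root of $f_k$ when it exists, so that $b=\max_k\rho_k$ and, since $f_n\equiv f$, one has $b\ge\rho_n=a$.

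First I would establish nonnegativity at $x=b$ together with the easy half ``every $x\ge b$ works.'' For each $k$, either $f_k$ has no positive root, in which case $f_k>0$ throughout $(0,\infty)$ because its leading coefficient is positive, or $\rho_k$ is its largest real root and $f_k\ge0$ on $[\rho_k,\infty)$. In either case $b\ge\rho_k$ forces $f_k(b)\ge0$, and the identical argument applied to any $x\ge b$ shows $f_1(x),\dots,f_n(x)\ge0$; thus every $x\ge b$ works. The clause ``at least one is strictly positive'' I would carry by the leading coefficient of the quotient, $f_0(b)=a_0>0$; reading the recurrence \eqref{coeff} at $x=b$ shows that the only way all of $f_1(b),\dots,f_n(b)$ can vanish is the degenerate case $f(x)=a_0x^{n-1}(x-b)$ (from $f_1(b)=a_0b+a_1=0$ and $f_k(b)=a_k=0$ for $k\ge2$), where the quotient is $a_0x^{n-1}$ and the remainder is $0$, so the conclusion of the conjecture still holds.

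The heart of the matter is ``no value of $x$ smaller than $b$ will work,'' and here the key observation is that the proof of Laguerre's Theorem~\ref{lag2} in fact delivers \emph{strict} positivity. Suppose some $c<b$ works, i.e. $f_1(c),\dots,f_n(c)\ge0$. This is identity~\eqref{b'} with its division point $b$ replaced by $c$ and its $b'$ replaced by $b$:
\[
f_i(b)=(b-c)\bigl\{a_0\,b^{\,i-1}+f_1(c)\,b^{\,i-2}+\cdots+f_{i-1}(c)\bigr\}+f_i(c),
\]
which is strictly positive because $b-c>0$, the brace is at least $a_0\,b^{\,i-1}>0$, and $f_i(c)\ge0$. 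In particular $f_i(b)>0$ for every $i$; but $b=\rho_j$ for some index $j$, so $f_j(b)=0$, a contradiction. Hence no $c<b$ works, and $b$ is optimal.

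The step I expect to be the main obstacle is precisely this last one: one must notice that the working hypothesis at $c$ upgrades to strict inequalities at every point beyond $c$, which is exactly what makes the presence of the root $f_j(b)=0$ fatal and thereby avoids any delicate case analysis over root multiplicities at $b$. The remaining subtlety is the bookkeeping around ``at least one strictly positive,'' where the single degenerate family $f(x)=a_0x^{n-1}(x-b)$ must be acknowledged and absorbed by the leading coefficient $f_0(b)=a_0>0$.
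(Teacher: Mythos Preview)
Your proof is correct and follows the same overall architecture as the paper's: nonnegativity at $b$ from eventual positivity of each $f_k$, optimality from identity~\eqref{b'}, and persistence for $x\ge b$ from Theorem~\ref{lag2}. Two points distinguish your treatment. First, for the nonnegativity step you argue directly from the positive leading coefficient of $f_k$ and the intermediate value theorem, whereas the paper invokes Lagrange's lemma to supply an explicit upper bound beyond which $f_r$ is positive before applying IVT; your route is more economical, since only eventual positivity is needed, not a quantitative bound. Second, your handling of the ``at least one strictly positive'' clause is sharper: the paper asserts that if all of $f_1(b),\dots,f_n(b)$ vanish then $f(x)=a_0x^n$, but as you compute from the recurrence~\eqref{coeff} one actually obtains $a_1=-a_0b$ and $a_2=\cdots=a_n=0$, hence $f(x)=a_0x^{n-1}(x-b)$, which \emph{does} have the positive root $x=b$; you correctly isolate this degenerate family and note that the conclusion of Theorem~\ref{ch} still holds because the quotient $a_0x^{n-1}$ has its single nonzero coefficient positive. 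For the optimality direction you and the paper apply the same identity~\eqref{b'} with the roles of the base point and the evaluation point interchanged (you fix $c$ and evaluate at $b$ to force $f_j(b)>0$; the paper fixes $b$ and evaluates at $b'$ to force $f_p(b')<0$), which is immaterial.
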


\begin{proof}
Since $f_n(x) \equiv f(x)$ has a positive root by hypothesis, the set
of positive roots of the coefficient polynomials $f_k(x)$,
$k = 1,\dots,n$, is not empty. Moreover, since $f_k(x)$ has
degree~$k$, the total number of positive roots cannot exceed
$1 + 2 +\cdots+ n = \frac{n(n + 1)}{2}\,$, which is a finite set of
real numbers. Therefore it has a maximal member, say $x = b$, which is
the largest positive root of $f_p(x)$, say.

Now we claim that \emph{every number $f_k(b)$, $k = 1,\dots,n$, is
nonnegative and that at least one is strictly positive}. If all were
equal to zero, then $f(x) = a_0 x^n$ which has no positive root, and
this contradicts the assumption $x = a > 0$ is a positive root of
$f(x)$. So, at least one $f_r(b) \neq 0$, with $r \neq p$. We claim
then that necessarily $f_r(b) > 0$ since if it were \emph{negative},
by Lagrange's lemma applied to $f_r(x)$, we would have to conclude
that
$$
b < 1 + \sqrt[k]{\frac{B}{a_0}}, \quad f_r(b) < 0, \quad
f_r \biggl( 1 + \sqrt[k]{\frac{B}{a_0}} \biggr) > 0,
$$
and by the intermediate value theorem $f_r(x) = 0$ at some value of
$x$ larger than $b$, which contradicts the maximality of~$b$.

Therefore, our maximal root $x = b$ will give us a quotient and
remainder whose nonzero coefficients are positive.

Moreover, \emph{no smaller value of~$b$ will work}. For, if division
of $f(x)$ by $x - b'$ where $0 < b' < b$ gives us a quotient and
remainder whose nonzero coefficients are positive, then the identity
\eqref{b'} with $i = p$ shows that $f_p(b') < 0$ since $b' - b < 0$;
and this contradicts the assumption that all nonzero coefficients are
nonnegative, since, in particular, we must therefore have
$f_p(b') \geq 0$.

Finally, Theorem~\ref{lag2} shows that every value of $x$ which is
greater than $x = b$ will also work.
\end{proof}

Note that we have given \emph{a necessary and sufficient condition}
for the quotient $\dfrac{f(x)}{x - b}$ to have a quotient polynomial
and remainder with all nonzero coefficients positive.


\section{Laguerre's algorithm} 

Although the existence of such an $x = b$ has been proved, finding the
largest positive root of all the coefficient polynomials and the
original polynomial rigorously is problematic. Laguerre therefore
proposed an algorithm to find a value of~$b$ which may not be optimal,
but which works and is easy to find.

We give Laguerre's own algorithm for finding such a value of $x = b$.

We find the smallest positive integer that makes $f_1(b)$ positive or
zero. Such a $b$ is easy to find since $f_1(x)$ is a linear polynomial
$f_1(x) = a_0 x + a_1$. If it turns out that all the numbers
$$
f_2(b), f_3(b), \dots, f_n(b)
$$
are nonnegative and $f_n(b) > 0$, then we can take this value of~$b$
as that whose existence is asserted in the theorem.

But, suppose that $f_{k+1}(b)$ is \emph{negative} while all the
preceding numbers
$$
f_1(b), f_2(b), \dots, f_k(b)
$$
are nonnegative. Then we repeat the process, trying
$x = b + 1, b + 2, \dots$ until a value $x = b_1$ is found that makes
$f_{k+1} \geq 0$. Since $f_{k+1} = x f_k + a_{k+1}$. such a~$b_1$ can
easily be found. By Theorem~\ref{lag2}, at the same time all the
numbers
$$
f_1(b_1), f_2(b_1), \dots, f_k(b_1)
$$
are nonnegative. Now, if
$$
f_1(b_1), f_2(b_1), \dots, f_{n-1}(b_1)
$$
are nonnegative and $f_n(b_1) > 0$, this value of $b$ is the value
whose existence is asserted in the theorem.

In the contrary case, we \emph{repeat} the process with a properly
chosen~$b_2$, etc. \emph{After at most $n$~trials, we will have found
a value of~$b$ as asserted in our conjecture}.

In practice, using a symbolic algebra program, one divides $f(x)$ by
$x - b$, $x - b_1$, etc., which makes it easy to quickly check the
positivity of the coefficients and the remainder.


\subsection*{Acknowledgment}
Support from the Vicerrector\'ia de Investigaci\'on of the University
of Costa Rica is acknowledged.


\end{document}